\title{On the Kawamata-Viehweg vanishing theorem for log Calabi-Yau surfaces in large characteristic}
\author{Tatsuro Kawakami}
\email{tatsurokawakami0@gmail.com}
\address{Department of Mathematics, Graduate School of Science, Kyoto University, Kyoto 606-8502, Japan}
\def\phi{\varphi}
\def\epsilon{\varepsilon}
\def\mapsto{\longmapsto}
\def\log{\operatorname{log}}
\def\Spec{\operatorname{Spec}}
\def\Supp{\operatorname{Supp}}
\def\Exc{\operatorname{Exc}}
\def\max{\operatorname{max}}
\newcommand{\Q}{\mathbb{Q}} 
\newcommand{\R}{\mathbb{R}} 
\newcommand{\Z}{\mathbb{Z}}
\newcommand{\PP}{\mathbb{P}}
\newcommand{\sO}{\mathcal{O}}
\theoremstyle{plain}
\newtheorem{thm}{Theorem}[section] 
\newtheorem{prop}[thm]{prop}
\newtheorem{lem}[thm]{Lemma}
\theoremstyle{definition} 
\newtheorem{defn}[thm]{Definition}
\theoremstyle{remark}
\newtheorem{rem}[thm]{Remark}
\newtheorem{defn and notation}[thm]{Definition and Notation}
\newtheorem*{notation}{Notation} 
\newtheorem*{cl}{Claim}
\newtheorem*{clproof}{Proof of Claim}
\theoremstyle{plain}
\newtheorem{theo}{Theorem}
\keywords{Kawamata-Viehweg vanishing; Log Calabi-Yau surfaces; Liftability to the ring of Witt vectors; Positive characteristic}
\subjclass[2020]{14F17,14E30,14D15}
\begin{document}
\tolerance = 9999

\maketitle
\markboth{Tatsuro Kawakami}{Kawamata-Viehweg vanishing on log Calabi-Yau surfaces}

\begin{abstract}
We prove that the Kawamata-Viehweg vanishing theorem holds for a log Calabi-Yau surface $(X, B)$ over an algebraically closed field of large characteristic when $B$ has standard coefficients. 
\end{abstract}

%\setcounter{tocdepth}{1}
%\tableofcontents

\section{Introduction}

The Kawamata-Viehweg vanishing theorem is an essential tool in birational geometry, but it does not hold in general in positive characteristic. For example, Cascini and Tanaka \cite{CT} clarified that there exists a smooth rational surface over an algebraically closed field of any characteristic that violates the Kawamata-Viehweg vanishing theorem. 
Log Fano varieties and log Calabi-Yau varieties naturally appear in the minimal model program and are significant objects in birational geometry. Therefore, it is important to investigate the Kawamata-Viehweg vanishing theorem on log Fano varieties and log Calabi-Yau varieties in positive characteristic.

Recently, significant progress has been made in the Kawamata-Viehweg vanishing theorem on a log del Pezzo surface, a two-dimensional log Fano variety. Cascini, Tanaka, and Witaszek \cite{CTW} proved the existence of a positive integer $p_0$ such that the Kawamata-Viehweg vanishing theorem holds on a log del Pezzo surface over an algebraically closed field of characteristic $p>p_0$. Additionally, Arvidsson, Bernasconi, and Lacini \cite{ABL} showed that $p_0=5$ is the optimal bound. The Kawamata-Viehweg vanishing theorem on a log del Pezzo surface has many important applications, including the proof of Cohen-Macaulayness of three-dimensional klt singularities in characteristic $p>5$. We refer to \cite{ABL}, \cite{Bernasconi-Kollar}, \cite{Hacon-Witaszek}, and the references therein for the details.

In this paper, our objective is to establish the existence of an integer $p_0$ such that the Kawamata-Viehweg vanishing theorem holds on a log Calabi-Yau surface $(X,B)$ over an algebraically closed field of characteristic $p>p_0$. Specifically, we determine such a value for $p_0$ when $B$ has standard coefficients. By standard coefficients, we mean that the coefficients of $B$ belong to $\{\frac{m-1}{m}|m\in\Z_{>0}\ \cup \infty \}$, where we define $\frac{\infty-1}{\infty}$ to be equal to $1$.

\begin{theo}\label{Intro, KVV}
There exists a positive integer $p_0\in\Z_{>0}$ with the following property:
Let $(X, B)$ be a log Calabi-Yau surface over an algebraically closed field of characteristic $p>p_0$ such that $B$ has standard coefficients.
Let $\Delta$ be a $\Q$-divisor such that $\Supp(\Delta)\subset \Supp(B)$ and $\lfloor \Delta\rfloor=0$. 
Let $D$ be a $\Z$-divisor such that $D-(K_X+\Delta)$ is nef and big. 
Then \[
H^i(X, \sO_X(D))=0
\] 
for all $i>0$.
\end{theo}
\begin{rem}
Arvidsson, Bernasconi, and Lacini \cite{ABL} made use of Lacini's classification \cite{lac} of klt del Pezzo surfaces in positive characteristic with Picard rank one in order to determine the optimal bound $p_0=5$. This allowed them to establish that every log del Pezzo surface over an algebraically closed field of characteristic $p>5$ satisfies the Kawamata-Viehweg vanishing theorem.

In a similar vein, if we can obtain a comparable classification of klt Calabi-Yau surfaces in positive characteristic, it may provide an explicit value of $p_0$ in Theorem \ref{Intro, KVV}.
However, to the best of the author's knowledge, such a classification is currently unknown, even in characteristic zero.
\end{rem}

Recalling that the Kawamata-Viehweg vanishing theorem on log del Pezzo surfaces played an important role in investigating three-dimensional klt singularities (\cite[Section 1]{ABL}), Theorem \ref{Intro, KVV} is expected to have applications in the study of three-dimensional lc singularities.

In order to prove Theorem \ref{Intro, KVV}, we prove the \textit{log liftability} of log Calabi-Yau surfaces.

\begin{defn}[Log liftability]
Let $k$ be an algebraically closed field of positive characteristic.
Let $(X, B)$ be a pair of a normal projective surface $X$ over $k$ and a $\Q$-divisor $B$ on $X$.
We say that $(X, B)$ is \textit{log liftable} if there exists a log resolution $f\colon Y \to X$ of $(X, B)$ such that $(Y,f_{*}^{-1}\Supp(B)+\Exc(f))$ lifts to the ring $W(k)$ of Witt vectors.
For the definition of liftability of a log smooth pair, we refer to \cite[Definition 2.6]{Kaw3}.
\end{defn}

\begin{theo}\label{Intro:Thm:log liftability of log CY}
Let $I\subset [\frac{1}{2},1]\cap \Q$ be a DCC set of rational numbers.
There exists a positive integer $p(I)\in\Z_{>0}$ depending only on $I$ with the following property:
For every log Calabi-Yau surface pair $(X, B)$ over an algebraically closed field of characteristic $p>p(I)$ such that the coefficients of $B$ belong to $I$, the pair $(X, B)$ is log liftable.
\end{theo}

\begin{rem}\,
\begin{enumerate}
\item 
Let $I$ be a finite set.
Cascini, Tanaka, and Witaszek \cite[Theorem 1.1]{CTW} proved the existence of a positive integer $p(I)\in\mathbb{Z}_{>0}$ depending only on $I$ with the following property: For every log del Pezzo surface $(X,B)$ over an algebraically closed field of characteristic $p>p(I)$ such that the coefficients of $B$ belong to $I$, the pair is either log liftable or globally $F$-regular. However, it remains unknown whether a globally $F$-regular surface pair $(X,B)$ is log liftable.
(Note that this is true when $B=0$ by \cite[Theorem 1.3]{BBKW}.)
As a result, the question of the log liftability of log del Pezzo surface pairs remains open (also see \cite[Conjecture 6.20]{KTTWYY1} for further discussion).
\item For the case without boundary, i.e., the log liftability of normal projective surfaces, we refer to \cite[Theorem 1.3]{Kaw3}.
\end{enumerate}
\end{rem}

\subsection{Sketch of proof of Theorem \ref{Intro:Thm:log liftability of log CY}}
The author \cite[Section 3]{Kaw3} provided a proof of Theorem \ref{Intro:Thm:log liftability of log CY} when $X$ is klt and $B=0$. The proof can be outlined as follows:

\textbf{Step 0:} Consider a klt log Calabi-Yau surface pair $(X,0)$ (see Definition \ref{def:log CY}). For simplicity, we assume that $X$ is not canonical.

\textbf{Step 1:} Begin by choosing an extraction $f\colon Y\to X$ of a divisor $E$ over $X$ with the maximum coefficient $e(E,X,0)$. This step reduces the problem of the log liftability of $X$ to that of the pair $(Y,E)$ (see Definition \ref{def:epsilon-klt} for the definition of $e(E,X,0)$).

\textbf{Step 2:} Show the existence of a positive real number $\epsilon\in\mathbb{R}_{>0}$, independent of $(Y,e(E,X,0)E)$, such that the pair $(Y,e(E,X,0)E)$ is $\epsilon$-klt (cf.~\cite[Lemma 3.9]{Kaw3}).

\textbf{Step 3:} Run a $K_Y$-MMP $\phi\colon Y\to Y'$ to obtain a $K_{Y'}$-Mori fiber space $Y'\to Z$ and reduce the log liftability of the pair $(Y,E)$ to that of $(Y',E'\coloneqq \phi_{*}E)$.

\textbf{Step 4:} Utilize the techniques from \cite{CTW} to prove the boundedness of pairs $(Y',E')$ as above. The log liftability of $(Y',E')$ is then deduced from this boundedness (\cite[Lemma 3.14]{Kaw3}).\\

If $(X,B)$ is klt, the above proof remains valid even when $B\neq 0$. However, when $(X,B)$ is not klt, there exist log Calabi-Yau surface pairs $(Y',E')$ that admit $K_{Y'}$-Mori fiber structures, but do not form a bounded family (see \cite[Example 1.11]{GR}). As a result, Step 4 fails in this case. To address this issue, the following strategy is adopted.\\

\textbf{Step 0:} Consider a log Calabi-Yau surface pair $(X,B)$ such that $(X,B)$ is not klt and $B$ has standard coefficients.

\textbf{Step 1:} Begin by taking a dlt blow-up $f\colon (Y,B_Y\coloneqq f_{*}^{-1}B+\text{Exc}(f))\to (X,B)$, which reduces the problem of log liftability from $(X,B)$ to $(Y,B_Y)$.

\textbf{Step 2:} Show the existence of a positive real number $\epsilon\in\mathbb{R}_{>0}$, independent of $(Y,B_Y)$, such that the pair $(Y, B_Y^{<1})$ is $\epsilon$-klt (Proposition \ref{prop:epsilon-dlt}). Here, $B_Y^{<1}$ denotes the sum of the irreducible components of $B_Y$ with coefficients less than one.

\textbf{Step 3:} Run a $(K_Y+B_Y^{<1})$-MMP to obtain a birational contraction $\phi\colon Y\to Y'$ and a $(K_{Y'}+B_{Y'}^{<1})$-Mori fiber space $(Y', B_{Y'}\coloneqq \phi_{*}B_Y)\to Z$. This step reduces the log liftability of $(Y,B_Y)$ to that of $(Y',B_{Y'})$.

\textbf{Step 4:} If $\dim\,Z=0$, prove the boundedness of pairs $(Y',B_{Y'})$ satisfying the conditions and deduce the log liftability of $(Y',B_{Y'})$ from this boundedness.

\textbf{Step 4':} If $\dim\,Z=1$, the pair $(Y',B_{Y'})$ does not form a bounded family in general (see \cite[Example 1.11]{GR}). However, we can show that $H^2(Y', T_{Y'}(-\log\,\Supp(B_{Y'})))=0$, which contains the obstruction for the log lifting of $(Y',B_{Y'})$. To prove this vanishing result, we require the fact that $B_{Y'}$ has standard coefficients, and there exists an irreducible component $C$ of $B_{Y'}$ with $\mathrm{coeff}_C B_{Y'}=1$ and $C$ dominates $Z$. To find such a component $C$, we run a $(K_Y+B_Y^{<1})$-MMP instead of a $K_X$-MMP in Step 3.\\

This revised strategy aims to address the challenges encountered in Step 4 of the previous approach by incorporating additional considerations.

\begin{notation}
A \textit{variety} is defined as an integral separated scheme of finite type over a field.
% Unless otherwise stated, we assume that a variety is defined over an algebraically closed field $k$ of characteristic $p>0$.
A \textit{curve} (resp.~a \textit{surface}) is a variety of dimension one (resp.~two).
For a proper birational morphism $\phi\colon X\to X'$ between normal surfaces and a $\Q$-divisor $D'$ on $X'$, we denote the Mumford pullback by $f^{*}D'$.
A \textit{pair} $(X,B)$ consists of a normal variety $X$ and an effective $\Q$-divisor $B$ such that the coefficients of $B$ belong to $[0,1]$.
We say that a pair $(X,B)$ is \textit{projective} if $X$ is projective.
We say that a pair $(X,B)$ is \textit{log smooth} if $X$ is smooth and $B$ has simple normal crossing support.
For definitions of the singularities appearing in the minimal model program, we refer to \cite[Section 2.3]{Kollar-Mori}.
Throughout this paper, we will use the following notation:
\begin{itemize}
\item $\Exc(f)$: the reduced exceptional divisor of a proper birational morphism $f$.
\item $\lfloor B \rfloor$ (resp.$\lceil B \rceil$): the \textit{round-down} (resp.\textit{round-up}) of a $\Q$-divisor $B$.
\item $\{ B \} \coloneqq B - \lfloor B \rfloor$: the \textit{fractional part} of a $\Q$-divisor $B$.
\item $B^{=1} = \sum_{b_i=1} b_i B_i$: the sum of the irreducible components of a $\Q$-divisor $B = \sum_i b_i B_i$ with coefficient one.
\item $B^{<1} = \sum_{b_i<1} b_i B_i$: the sum of the irreducible components of a $\Q$-divisor $B = \sum_i b_i B_i$ with coefficient less than one.
\item $\mathcal{F}^{*}$: the dual of a coherent sheaf $\mathcal{F}$.
\item $\Omega_X^{[i]}(\log B)$: the $i$-th logarithmic reflexive differential form $j_{*}\Omega_U^i(\log B)$, where $j\colon U \hookrightarrow X$ is the inclusion of the log smooth locus of a pair $(X, B)$ consisting of a normal variety $X$ and a reduced divisor $B$ on $X$.
\item $T_X(-\log B) \coloneqq (\Omega_X^{[1]}(\log B))^{*}$: the \textit{logarithmic tangent sheaf} for a pair $(X, B)$ consisting of a normal variety $X$ and a reduced divisor $B$ on $X$.
\item $W(k)$: the ring of Witt vectors.
\end{itemize}
\end{notation}

\section{Preliminaries}

\subsection{Log Calabi-Yau surfaces}

In this subsection, we will review fundamental properties of log Calabi-Yau surfaces.

\begin{defn}\label{def:epsilon-klt}
Let $(X,B)$ be a surface pair over an algebraically closed field.
Let $E$ be a prime divisor over $X$, and let $f\colon Y\to X$ be a proper birational morphism from a normal surface $Y$ such that $E$ is a divisor on $Y$.
The coefficient of $E$ in $f^{*}(K_X+B)-K_Y$ is denoted by $e(E,X,B)$. Note that $e(E,X,B)$ does not depend on the choice of $f$.

We fix a real number $\epsilon\in\R_{>0}$.
We say that a pair $(X, B)$ is \textit{$\epsilon$-klt} if $K_X+B$ is $\Q$-Cartier and $e(X,B,E)$ is less than $1-\epsilon$ for every divisor $E$ over $X$.
\end{defn}

\begin{defn}\label{def:log CY}
Let $(X, B)$ be a projective surface pair over an algebraically closed field.
We say that $(X, B)$ is \textit{log del Pezzo} if $(X, B)$ is klt and $-(K_X+B)$ is ample.
We say that $(X, B)$ is \textit{log Calabi-Yau} if $(X, B)$ is lc and $K_X+B\equiv 0$.
We say that $(X, B)$ is \textit{dlt (resp.~$\epsilon$-klt) log Calabi-Yau} if $(X,B)$ is dlt (resp.~$\epsilon$-klt) and $K_X+B\equiv 0$.
\end{defn}

\begin{lem}\label{lem:log CY}
    Let $(X,B)$ be a projective surface pair over an algebraically closed field.
    Let $\phi\colon X\to X'$ be a birational morphism to a normal projective surface $X'$, and $B'\coloneqq \phi_{*}B$.
    Let $\epsilon\in\R_{>0}$ be a real number. 
    Suppose that $K_{X'}+B'$ is $\Q$-Cartier.
    If $(X, B)$ is log Calabi-Yau (resp.~$\epsilon$-klt log Calabi-Yau), then so is $(X',B')$.
\end{lem}
\begin{proof}
    Since $K_X+B\equiv 0$, we have $K_{X'}+B'=\phi_{*}(K_X+B)\equiv 0$.
    Then the negativity lemma shows that $K_X+B\equiv\phi^{*}(K_{X'}+B')$.
    Thus, we have $e(X,B,E')=e(X',B',E')$ for every divisor $E'$ over $X'$ (\cite[Lemma 2.30]{Kollar-Mori}).
    Finally, $K_{X'}+B'$ is $\Q$-Cartier by assumption, we obtain the assertion.
\end{proof}

\begin{lem}\label{lem:dlt}
    Let $(X,B)$ be a dlt surface pair over an algebraically closed field.  
    Let $B^{=1}=\sum_{i}B^{=1}_{i}$ be the irreducible decomposition of $B^{=1}$ and $F\coloneqq \bigcup_{i\neq j} (B^{=1}_{i}\cap B^{=1}_{j})$.
    Let $E$ be a prime exceptional divisor over $X$.
    Then $e(E,X,B)=1$ if and only if the center of $E$ is contained in $F$.
\end{lem}
\begin{proof}
    Note that $(X,B)$ is log smooth at every point of $F$ since it is dlt.
    Therefore, the ``if'' direction follows from the well-known calculation of the discrepancy of log smooth pairs (\cite[Corollary 2.31 (3)]{Kollar-Mori}).

   Now, we prove the ``only if'' direction.
   Suppose that $e(E,X,B)=1$.
   By the definition of dltness (\cite[Definition 2.37]{Kollar-Mori}), the center of $E$ is contained in the log smooth locus of $(X,B)$.
   Then, using \cite[Corollary 2.31 (3)]{Kollar-Mori} again, we conclude that the center must be contained in $F$.
\end{proof}

\begin{lem}\label{lem:extraction}
Let $(X,B)$ be a dlt surface pair over an algebraically closed field.
Let $B^{=1}=\sum_{i}B^{=1}_{i}$ be the irreducible decomposition of $B^{=1}$ and $F\coloneqq \bigcup_{i\neq j} (B^{=1}_{i}\cap B^{=1}_{j})$.
Let $E$ be a prime exceptional divisor over $X$ such that $e(E,X,B)\geq 0$ and the center is not contained in $F$.
Then $e(E,X,B)<1$ and
there exists a proper birational morphism $f\colon Y\to X$ satisfying the following properties:
\begin{enumerate}
        \item[\textup{(1)}] $\Exc(f)=E$ and
        \item[\textup{(2)}] $K_Y+f_{*}^{-1}B+e(E,X,B)E=f^{*}(K_X+B)$.
    \end{enumerate}
We call $f$ the \textit{extraction} of $E$.
\end{lem}
\begin{proof}
We take a log resolution $g\colon W\to X$ of $(X,B)$ such that $E$ is a divisor on $W$.
Since the center of $E$ is not contained in $F$, we can assume that $g$ is an isomorphism over a neighborhood of $F$.
We can write the equation as follows:
\[
K_{W}+g_{*}^{-1}B+\sum_{i} a_{i}E_{i}=g^{*}(K_X+B)
\]
where the sum runs over all the $g$-exceptional divisors.
Since the center of $E_i$ is not contained in $F$, we have $a_{i}<1$ for every $i$ by Lemma \ref{lem:dlt}.
By changing the order of the exceptional divisors, we may assume that $E_1=E$ and $a_{1}=e(E, X, B)$.

Next, we run a $(K_W+g_{*}^{-1}B+a_{1}E_{1}+\sum_{i\geq 2} E_{i})$-MMP over $X$ to obtain a birational contraction $\phi\colon W\to Y$ and the minimal model $f\colon Y\to X$ over $X$.
Since $K_W+g_{*}^{-1}B+a_{1}E_{1}+\sum_{i\geq 2} E_{i}\equiv_{X} \sum_{i\geq 2}(1-a_i)E_i$, it follows that $\Exc(\phi)\subset \sum_{i\geq 2}E_i$.
Since $K_Y+f_{*}^{-1}B+a_{1}\phi_{*}E_{1}+\sum_{i\geq 2} \phi_{*}E_{i}\equiv_{X} \sum_{i\geq 2}(1-a_i)\phi_{*}E_i$ is nef over $X$, the negativity lemma shows that $\Exc(\phi)=\sum_{i\geq 2}E_i$.
Therefore, we have $\Exc(f)=E_1$ and
\[
K_Y+f_{*}^{-1}B+a_1E_1=f^{*}(K_X+B),
\]
as desired.
\end{proof}

\subsection{Log liftability}

In this subsection, we discuss basic properties of log liftability.

\begin{defn}[Log liftability]
Let $(X, B)$ be a projective surface pair over an algebraically closed field $k$ of positive characteristic.
Let $R$ be a Noetherian complete local ring with residue field $k$.
We say that $(X, B)$ is \textit{log liftable to $R$} if there exists a log resolution $f\colon Y \to X$ of $(X, B)$ such that $(Y,f_{*}^{-1}\Supp(B)+\Exc(f))$ lifts to $R$.
For the definition of liftability of a log smooth pair, we refer to \cite[Definition 2.6]{Kaw3}.
When $R$ is the ring $W(k)$ of Witt vectors, we simply say that $(X,B)$ is \textit{log liftable}.
\end{defn}

\begin{lem}\label{lem:push}
Let $(X, B)$ be a projective surface pair over an algebraically closed field $k$ of positive characteristic.
Let $\phi\colon X\to X'$ be a birational morphism to a normal projective surface $X'$ and $B'\coloneqq \phi_{*}B$.
Let $R$ be a Noetherian complete local ring with residue field $k$.
Then the followings hold.
\begin{enumerate}
    \item[\textup{(1)}] If $(X, B)$ is log liftable to $R$ and $\Exc(\phi)\subset \Supp(B)$, then $(X', B')$ is log liftable to $R$.
    \item[\textup{(2)}] If $(X', B')$ is log liftable to $R$ and $R$ is regular, then $(X, B)$ is log liftable to $R$.
\end{enumerate}
\end{lem}
\begin{proof}
First, we prove (1). Suppose that $(X,B)$ is log liftable to $R$. Then there exists a log resolution $f\colon Y\to X$ such that $(Y, f_{*}^{-1}\Supp(B)+\Exc(f))$ lifts to $R$. We have a commutative diagram as follows:
\[ 
\xymatrix{
 (Y, f_{*}^{-1}\Supp(B)+\Exc(f))\ar[d]_-{f}\ar[rd]^-{\phi \circ f} & \\
 (X,\Supp(B))\ar[r]_{\phi} &   (X', \Supp(B'))     . \\
}
\]
Clearly, we have 
\[
(\phi \circ f)^{-1}_{*}\Supp(B')=f_{*}^{-1}(\phi_{*}^{-1}\Supp(B'))\subset f_{*}^{-1}\Supp(B).
\]
Since $f_{*}^{-1}\Exc(\phi)\subset f_{*}^{-1}\Supp(B)$ by assumption, we also have 
\[
\Exc(\phi \circ f)=f_{*}^{-1}\Exc(\phi)+\Exc(f)\subset f_{*}^{-1}\Supp(B)+\Exc(f).
\]
In particular, we obtain
\[
(\phi \circ f)^{-1}_{*}\Supp(B')+\Exc(\phi \circ f)\subset f_{*}^{-1}\Supp(B)+\Exc(f).
\]
 Thus $\phi \circ f$ is a log resolution of $(X',B')$ and $(Y, (\phi \circ f)^{-1}_{*}\Supp(B)+\Exc(\phi \circ f))$ lifts to $R$, i.e., $(X',B')$ is log liftable to $R$.

Next, we prove (2).
Suppose that $(X', B')$ is log liftable to $R$ and $R$ is regular.
Then there exists a log resolution $f'\colon Y'\to X'$ of $(X',B')$ such that $(Y', (f')_{*}^{-1}\Supp(B')+\Exc(f'))$ lifts to $R$.
We take a resolution $g'\colon W'\to Y'$ of the indeterminacy of the rational map $\phi^{-1}\circ f'\colon Y'\dashrightarrow X$ and denote the obtained morphism by $h'\colon W'\to X$. We obtain a commutative diagram as follows:
 \[
\xymatrix{
 (W', (f'\circ g')_{*}^{-1}\Supp(B')+\Exc(f'\circ g'))\ar[d]_-{h'}\ar[r]^-{g'} & (Y', (f')_{*}^{-1}\Supp(B')+\Exc(f')) \ar[d]^-{f'}\ar@{.>}[ld] \\
 (X,\Supp(B))\ar[r]_{\phi} &   (X', \Supp(B'))     . \\
}
\]
Since $g'$ is a composition of blow-ups at smooth points and $R$ is regular, it follows from \cite[Lemma 2.8]{Kaw3} that $(W', (f'\circ g')_{*}^{-1}\Supp(B')+\Exc(f'\circ g'))$ is log smooth and lifts to $R$.
 Note that 
 \[
 \Exc(h')\subset \Exc(\phi\circ h')=\Exc(f'\circ g')
 \] 
 and 
 \begin{align*}
     (h')_{*}^{-1}\Supp(B)&\subset(\phi\circ h')_{*}^{-1}\Supp(B')+(h')_{*}^{-1}\Exc(\phi)\\
     &\subset (f'\circ g')_{*}^{-1}\Supp(B')+\Exc(f'\circ g').
 \end{align*}
 In particular, we have \[(h')_{*}^{-1}\Supp(B)+\Exc(h')\subset (f'\circ g')_{*}^{-1}\Supp(B')+\Exc(f'\circ g').\]
 Since $(W', (f'\circ g')_{*}^{-1}\Supp(B')+\Exc(f'\circ g'))$ is log smooth and lifts to $R$, it follows that $h'\colon W'\to X$ is a log resolution of $(X,B)$ and $(W', (h')_{*}^{-1}\Supp(B)+\Exc(h'))$ lifts to $R$. Therefore, $(X, B)$ is log liftable to $R$.

\end{proof}

\begin{thm}\label{thm:log lift criterion}
Let $X$ be a projective surface pair over an algebraically closed field $k$ of positive characteristic such that $B$ is reduced.
Let $R$ be a Noetherian complete local ring with residue field $k$.
Suppose that $H^2(X, T_{X}(-\log\,B))=0$ and $H^2(X, \sO_X)=0$.
Then $(X,B)$ is log liftable to $R$.
\end{thm}
\begin{proof}
 Let $f\colon Y\to X$ be a log resolution of $(X,B)$. 
By \cite[Remark 4.2]{Kaw3}, we have 
\[
H^2(T_Y(-\log\,f^{-1}_{*}B+\Exc(f)))\hookrightarrow H^2(X, T_X(-\log\,B))=0.
\]
We also have 
\[
H^2(Y, \sO_Y)\cong H^0(Y, \sO_Y(K_Y))\hookrightarrow H^0(X, \sO_X(K_X))\cong H^2(X, \sO_X)=0,
\]
where we use \[f_{*}\sO_Y(K_Y)\hookrightarrow (f_{*}\sO_Y(K_Y))^{**}=\sO_X(f_{*}K_Y)=\sO_X(K_X)\] for the second injective map.
Then it follows from \cite[Theorem 2.10]{Kaw3} that $(Y, f^{-1}_{*}\Supp(B)+\Exc(f))$ lifts to $R$.
Therefore, $(X,B)$ is log liftable to $R$.
\end{proof}

\begin{lem}\label{lem:log lift criterion 2}
Let $I\subset [0,1]\cap \Q$ be a finite set of rational numbers.
There exists a positive integer $p(I)\in\Z_{>0}$ depending only on $I$ with the following property:
For every projective surface pair $(X,B)$ over an algebraically closed field of characteristic $p>p(I)$ satisfying the following conditions:
\begin{enumerate}
\item[\textup{(1)}] The coefficients of $B$ belong to $I$,
\item[\textup{(2)}] there exists a positive integer $m(I)\in\Z_{>0}$ depending only on $I$ such that $m(I)B$ is Cartier, and
\item[\textup{(3)}] there exists a very ample divisor $H$ on $X$ such that there are only finitely many possibilities for $H^2$, $H\cdot K_{X}$, $H\cdot B$, $K_{X}\cdot B$, $B^2$, $\dim\,H^0(X, \sO_{X}(H))$, and $\chi(X, \sO_X)$,
\end{enumerate}
the pair $(X,B)$ is log liftable.
\end{lem}
\begin{proof}
    We first show the following claim.
\begin{cl}
There exists a flat family $(\mathcal{X},\mathcal{B})$ of pairs over a reduced quasi-projective scheme $T$ over $\Spec\,\Z$ such that every projective surface pair $(X,B)$ over an algebraically closed field of characteristic $p>5$ satisfying conditions (1)--(3) is a geometric fiber of $(\mathcal{X},\mathcal{B})$.
\end{cl}
\begin{clproof}
    By the Riemann-Roch theorem, the Hilbert polynomials of $X$ and $nB$ with respect to $H$ are given by:
    \[
    \frac{H^2}{2}t^2-\frac{H\cdot K_X}{2}t+\mathcal{X}(X, \sO_X)
    \] and 
    \[(nB\cdot H)t-\frac{1}{2}nB(nB+K_X)
    \] 
    respectively (see \cite[Remark 6.3]{Wit}).
    Therefore, the claim can be proved using essentially the same argument as \cite[Lemma 3.1]{CTW}.
\end{clproof}
    
Now, the claim and the proof of \cite[Proposition 3.2]{CTW} shows that there exists a positive integer $p(I)\in\Z_{>0}$ depending only on $I$ with the following property: For every projective surface pair $(X,B)$ over an algebraically closed field of characteristic $p>p(I)$ satisfying conditions (1)--(3), we can take a log resolution $f\colon Y\to X$ such that $(Y, f_{*}^{-1}\Supp(B)+\Exc(f))$ lifts to characteristic zero over a smooth base, as defined in \cite[Definition 2.15]{CTW}.

Since liftability to characteristic zero over a smooth base is equivalent to liftability to $W(k)$ by \cite[Proposition 2.5]{ABL}, we can conclude the assertion.
\end{proof}

\section{Proof of Theorems \ref{Intro, KVV} and \ref{Intro:Thm:log liftability of log CY}}

\subsection{Dlt log Calabi-Yau surface pairs}
There exists a positive real number $\epsilon\in\R_{>0}$ such that every klt log Calabi-Yau surface pair $(X,0)$ over every algebraically closed field is $\epsilon$-klt (\cite[Lemma 3.9]{Kaw3}).
In this subsection, we aim to generalize this result to the case of dlt log Calabi-Yau surface pairs (Proposition \ref{prop:epsilon-dlt}).

To begin, we recall the lemma known as the global ACC:

\begin{lem}\label{lem:global ACC}
Let $I\subset [0,1]\cap \Q$ be a DCC set of rational numbers. Then there exists a finite subset $J\subset I$ with the following property: For every projective surface pair $(X,B)$ over an algebraically closed field satisfying the following conditions:
\begin{enumerate}
\item[\textup{(1)}] $(X,B)$ is log Calabi-Yau and
\item[\textup{(2)}] the coefficients of $B$ belong to $I$,
\end{enumerate}
all the coefficients of $B$ belong to $J$.
\end{lem}
\begin{proof}
This is \cite[Lemma 3.8]{Kaw3}.
\end{proof}

\begin{prop}\label{prop:epsilon-dlt}
Let $I\subset [0,1]\cap \Q$ be a DCC set of rational numbers.
There exists a positive real number $\epsilon(I)\in\R_{>0}$ depending only on $I$ with the following property:
For every projective surface pair $(X, B)$ over an algebraically closed field satisfying the following conditions:
\begin{enumerate}
\item[\textup{(1)}] $(X,B)$ is dlt log Calabi-Yau and
\item[\textup{(2)}] the coefficients of $B$ belong to $I$,
\end{enumerate}
the pair $(X, B^{<1})$ is $\epsilon(I)$-klt.
\end{prop}
\begin{proof}
By Lemma \ref{lem:global ACC}, the coefficients of $B$ belong to a finite subset of $I$.
Therefore, it suffices to show that there exists a positive real number $\epsilon(I)\in\R_{>0}$ such that $e(E,X,B^{<1})< 1-\epsilon(I)$ for every exceptional divisor $E$ over $X$.

Suppose, by contradiction, that there exists a sequence $\{(X_l,B_l)\}_{l\in \Z_{>0}}$ of projective surface pairs over algebraically closed fields satisfying the conditions (1) and (2), and $\{e(E_{l}^{\max},X_l,B_{l}^{<1})\}_{l\in \Z_{>0}}$ converges to $1$, where $E_l^{\max}$ is a prime exceptional divisor over $X_l$ with the largest coefficient with respect to $(X_l,B_l^{<1})$ among all exceptional divisors.
By taking a subsequence if necessary, we may assume that $e(E_{l}^{\max},X_l,B^{<1}_l)>0$ for every $l$.

Let $B_l^{=1}=\sum_i B^{=1}_{l,i}$ be the irreducible decomposition of $B_l^{=1}$.
We set $F_l\coloneqq \bigcup_{i\neq j} (B^{=1}_{l,i}\cap B^{=1}_{l,j})$.
Suppose that the center of $E_l^{\max}$ is contained in $F_l$. Since $(X_l,B_l)$ is dlt, $X_l$ is smooth at the center of $E_l^{\max}$. Thus, we have $e(E^{\max}_l,X_l,B_{l}^{<1})=-1$, which contradicts the fact that $e(E_{l}^{\max},X_l,B^{<1}_l)>0$.
Therefore, the center of $E_l^{\max}$ is not contained in $F_l$.
Moreover, we have $e(E_{l}^{\max},X_l,B_l)\geq e(E_{l}^{\max},X_l,B^{<1}_l)>0$.
Thus we can use Lemma \ref{lem:extraction}, and we have $e(E_{l}^{\max}, X_l,B_l)<1$ and the extraction $g_{l}\colon Z_{l}\to X_{l}$ of $E^{\max}_l$.
Since we have
\[
K_{Z_l}+(g_{l})^{-1}_{*}B_l+e(E_{l}^{\max}, X_l,B_l)E_{l}^{\max}=(g_l)^{*}(K_{X_l}+B_l),
\]
the pair $(Z_{l}, (g_{l})^{-1}_{*}B_l+e(E_{l}^{\max}, X_l,B_l)E_{l}^{\max})$ is log Calabi-Yau. 
Since $\{e(E_{l}^{\max},X_l,B_{l}^{<1})\}_{l\in \Z_{>0}}$ converges to $1$ and \[e(E_{l}^{\max},X_l,B_{l}^{<1})\leq e(E_{l}^{\max},X_l,B_{l})<1,\]
we can assume, by taking a subsequence if necessary, that $\{e(E_{l}^{\max},X_l,B_{l})\}_{l\in \Z_{>0}}$ is a strictly increasing sequence.
This contradicts Lemma \ref{lem:global ACC}.
\end{proof}

\subsection{The klt case of Theorem \ref{Intro:Thm:log liftability of log CY}}
In this subsection, we prove Theorem \ref{Intro:Thm:log liftability of log CY} when $(X,B)$ is klt (Proposition \ref{prop:klt case}).
When $X$ is klt and $B=0$, Theorem \ref{Intro:Thm:log liftability of log CY} has been proven in \cite[Section 3]{Kaw3}.
This proof also works even when $B\neq 0$ if $(X,B)$ is klt.

\begin{lem}\label{lem:canonical CY case}
Let $X$ be a normal projective surface over an algebraically closed field of characteristic $p>19$ such that $X$ has only canonical singularities and $K_X\equiv 0$.
Then $X$ is log liftable.
\end{lem}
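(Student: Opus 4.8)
The plan is to reduce to the case of a smooth surface with $K_Y \equiv 0$ together with its minimal resolution, and then invoke known liftability and degeneration-of-Hodge-to-de-Rham results for such surfaces. First I would take the minimal resolution $f\colon Y \to X$. Since $X$ has only canonical singularities, $f$ is crepant, so $K_Y = f^*K_X \equiv 0$; thus $Y$ is a smooth projective surface with numerically trivial canonical class. By the Enriques–Kodaira-type classification in positive characteristic (valid in characteristic $p > 19$, which handles the pathological small-$p$ behaviour of quasi-elliptic and non-classical Enriques/bielliptic surfaces), $Y$ is, up to the relevant classification, either an abelian surface, a (quasi-)hyperelliptic surface, a K3 surface, or an Enriques surface (possibly after a further étale cover in the bielliptic case, or a minimal model replacement since $Y$ is already minimal). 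For each of these classes, $Y$ is known to lift to $W(k)$: abelian surfaces lift (Serre–Tate/Grothendieck deformation theory of abelian varieties), K3 surfaces lift to $W(k)$ by Deligne's theorem (for $p \neq 2$, or more generally by Lang–Nygaard; $p>19$ suffices), Enriques surfaces lift by the results of Lang and of Ekedahl–Shepherd-Barron/Liedtke (again the restriction $p>2$ is the only subtlety, so $p>19$ is fine), and (quasi-)hyperelliptic surfaces lift since for $p>3$ they are genuine bielliptic surfaces, which are quotients of abelian surfaces and hence lift.

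Having lifted $Y$ itself, the next step is to show the pair $(Y, \Exc(f))$ lifts to $W(k)$; here $\Exc(f)$ is the reduced exceptional divisor, which is a disjoint union of ADE configurations of $(-2)$-curves. The cleanest route is to observe that $H^2(Y, T_Y(-\log \Exc(f)))$ is the obstruction space for lifting the log pair, and to argue this vanishes — or alternatively, to use that the contraction $f\colon Y\to X$ and its exceptional locus deform along any lift of $Y$ because the $(-2)$-curves are rigid and their deformations are unobstructed (each smooth rational $(-2)$-curve $E$ has $H^1(E, N_{E/Y}) = H^1(\PP^1, \sO(-2)) = 0$, so $E$ and the whole ADE bundle move in a lift of $Y$). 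Concretely, I would cite that a minimal resolution of canonical (= Du Val) surface singularities is log liftable whenever the ambient smooth surface lifts, since the exceptional chains are unobstructed — this is exactly the content of the no-boundary surface liftability results of \cite{Kaw3} once one knows the smooth minimal model lifts.

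Finally I would assemble these: $(X, B=0)$ is log liftable by definition once we exhibit a log resolution $f\colon Y\to X$ of $(X,0)$ such that $(Y, \Exc(f))$ lifts to $W(k)$, and the minimal resolution provides exactly such an $f$. The main obstacle I anticipate is bookkeeping around the classification in positive characteristic and making sure the bound $p>19$ genuinely covers every case that could cause trouble — in particular quasi-elliptic surfaces (only occur for $p\in\{2,3\}$), supersingular/non-liftable phenomena in low characteristic, and the subtlety that a smooth surface with $K_Y\equiv 0$ need not be minimal a priori, though here it is because it is a minimal resolution and any $(-1)$-curve would contradict crepancy together with $K_X\equiv 0$. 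Once the correct classification input is in place, the deformation-theoretic arguments for lifting the surface and then the exceptional divisor are standard and short.
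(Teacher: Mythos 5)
The paper's own proof of this lemma is a one-line citation to \cite[Proposition 3.3]{Kaw3}, so the real question is whether your reconstruction of that argument is sound. Your overall strategy (pass to the minimal resolution $Y$, note $K_Y\equiv 0$ and $Y$ minimal, classify, lift $Y$, then lift the ADE exceptional configuration) is indeed the natural one and is in the spirit of the cited result. But the step where you lift the pair contains a genuine error. You assert that each $(-2)$-curve $E\cong\PP^1$ satisfies $H^1(E,N_{E/Y})=H^1(\PP^1,\sO(-2))=0$ and hence deforms along any lift of $Y$. This is false: by Serre duality $H^1(\PP^1,\sO(-2))\cong H^0(\PP^1,\sO)^{*}=k\neq 0$, so the deformations of $E$ inside a deforming $Y$ are obstructed. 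Far from being a formality, this is exactly the crux of the problem: a $(-2)$-curve on a K3 surface does \emph{not} survive a general deformation (the locus of K3s containing a given $(-2)$-class is a divisor in moduli), so a randomly chosen lift of $Y$ to $W(k)$ will in general destroy $\Exc(f)$. The same issue arises for Enriques surfaces. (For the abelian and bielliptic cases there are no rational curves, so $X=Y$ and there is nothing to lift beyond the surface itself.)

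To repair this you must either (i) carry out the alternative you mention only in passing, namely prove $H^2(Y,T_Y(-\log \Exc(f)))\cong H^0(Y,\Omega^1_Y(\log \Exc(f))\otimes\omega_Y)^{*}=0$ directly — via the residue sequence this reduces to $H^0(Y,\Omega^1_Y)=0$ together with the injectivity of the Chern class map on the sublattice of $\Pic(Y)$ spanned by the exceptional curves, and it is precisely here (the discriminants of the ADE lattices must be prime to $p$, and supersingular K3s must be handled) that a bound like $p>19$ is actually used — or (ii) lift $Y$ \emph{together with} the line bundles $\sO_Y(E_i)$, which for K3 surfaces requires choosing the formal lift so that the relevant crystalline Chern classes stay in the correct Hodge filtration step (Deligne's argument), not taking an arbitrary lift. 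Relatedly, you should be careful that the classical lifting results for K3 and Enriques surfaces a priori produce lifts over a possibly ramified finite extension of $W(k)$, whereas the paper's definition of log liftability demands $W(k)$ itself; this needs to be addressed rather than waved through. As written, the proposal identifies the right skeleton but the decisive deformation-theoretic step is wrong, so the proof has a real gap.
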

\begin{proof}
This is \cite[Propostion 3.3]{Kaw3}.
\end{proof}

\begin{defn}\label{MFS}
Let $(X, B)$ be a pair over an algebraically closed field.
Let $g \colon X \to Z$ be a projective surjective morphism to a normal variety $Z$ such that $g_{*}\sO_X=\sO_Z$.
We say that $g \colon X \to Z$ is a \textit{$(K_X+B)$-Mori fiber space} if
\begin{enumerate}
    \item[\textup{(1)}] $-(K_X+B)$ is $g$-ample,
    \item[\textup{(2)}] $\dim \, X>\dim \, Z$, and
    \item[\textup{(3)}] the relative Picard rank $\rho(X/Z)=1$.
\end{enumerate}
\end{defn}

\begin{prop}\label{prop:klt case}
Let $I\subset [0,1]\cap \Q$ be a DCC set of rational numbers.
There exists a positive integer $p(I)$ depending only on $I$ with the following property:
For every projective surface pair $(X, B)$ over an algebraically closed field of characteristic $p>p(I)$ satisfying the following conditions:
\begin{enumerate}
    \item[\textup{(1)}] $(X,B)$ is klt log Calabi-Yau, and
    \item[\textup{(2)}] the coefficients of $B$ belong to $I$,
\end{enumerate}
the pair $(X, B)$ is log liftable.
\end{prop}
\begin{proof}
By Lemma \ref{lem:global ACC}, we can assume that $I$ is a finite set. Furthermore, by Proposition \ref{prop:epsilon-dlt}, we can find $\epsilon(I)$ depending only on $I$, such that every projective surface pair $(X, B)$ over an algebraically closed field satisfying conditions (1) and (2) is $\epsilon(I)$-klt.
Then we can use \cite[Lemma 3.6]{Kaw3} to find a uniform bound $m(I)$ on the $\Q$-factorial index, depending only on $I$. We replace $I$ with $I\cup\{\frac{1}{m(I)},\ldots, \frac{m(I)-1}{m(I)}\}$.

\textbf{Step 1: The case where $B\neq 0$.}\,\,
In this step, we aim to find the desired positive integer $p(I)$ under the additional assumption that $B\neq 0$.

Let $(X, B)$ a projective surface pair over an algebraically closed field of characteristic $p$ satisfying the conditions (1) and (2).
Since $K_{X}\equiv -B$ is not pseudo-effective, we can run a $K_X$-MMP to obtain a birational contraction $\phi\colon X\to X'$ and a $K_{X'}$-Mori fiber space $X'\to Z$. By Lemma \ref{lem:log CY}, the pair $(X', B'\coloneqq \phi_{*}B)$ is $\epsilon(I)$-klt log Calabi-Yau. Note that $K_{X'}+B'$ is $\Q$-Cartier because $X'$ is $\Q$-factorial. Additionally, using Lemma \ref{lem:push} (2), we can deduce that if $(X', B')$ is log liftable, then so is $(X,B)$. Finally, we can conclude the existence of the desired positive integer $p(I)$ by \cite[Lemma 3.14]{Kaw3}.

\textbf{Step 2: The case where $B=0$.}\,\,
We take $p(I)$ as defined above and replace it with $\max\{p(I), 19\}$. 
In this step, we aim to prove that a klt log Calabi-Yau surface pair $(X,0)$ over an algebraically closed field of characteristic $p>p(I)$ is log liftable, thus completing the proof of the proposition.

We take $X=(X,0)$ as above.
If $X$ is canonical, then it is log liftable by Lemma \ref{lem:canonical CY case} since $p>19$. 
We assume that $X$ is not canonical.
By Lemma \ref{lem:extraction}, we can take an extraction $f\colon Y\to X$ of a divisor $E$ over $X$ with the maximum coefficient $e(E, X, 0)$. Note that $e(E, X, 0)>0$ since $X$ is not canonical. Since $K_Y+e(E,X,0)E\equiv f^{*}K_X$, the pair $(Y, e(E,X,0)E)$ is $\epsilon(I)$-klt log Calabi-Yau. It suffices to show that $(Y, e(E,X,0)E)$ is log liftable by Lemma \ref{lem:push} (1).
Considering that $m(I)K_X$ is Cartier, we can conclude that \[e(E, X, 0)\in\{\frac{1}{m(I)},\ldots, \frac{m(I)-1}{m(I)}\}\subset I.\] 
Thus, since $p>p(I)$, the pair $(X,B)$ is log liftable by the previous step.
\end{proof}

\subsection{The non-klt case of Theorem \ref{Intro:Thm:log liftability of log CY}}
In this subsection, we prove the non-klt case of Theorem \ref{Intro:Thm:log liftability of log CY} (Proposition \ref{prop:non-klt case}). Unlike the klt case, the non-klt case does not follow from the direct generalization of \cite[Section 3]{Kaw3}, and we require additional ingredients.

Proposition \ref{prop:non-klt case} is reduced to the log liftability of certain dlt Mori fiber spaces. When the base of the Mori fiber space is a point, we prove the log liftability by combining Lemma \ref{lem:log lift criterion 2} and Proposition \ref{prop:epsilon-dlt}. When the base of the Mori fiber space is a curve, we utilize Lemma \ref{lem:fiber cases}. In this lemma, we prove the vanishing of the second cohomology of the logarithmic tangent sheaf in order to apply Theorem \ref{thm:log lift criterion}. To prove the vanishing, we require the assumption that the coefficients of boundary divisors are greater than or equal to $\frac{1}{2}$.

\begin{lem}\label{lem:fiber cases}
Let $(X,B)$ be a projective surface pair such that the coefficients of $B$ belong to $[\frac{1}{2},1]\cap \Q$.
Let $g\colon X\to Z$ be a surjective morphism to a smooth projective curve $Z$ such that $g_{*}\sO_X=\sO_Z$.
Suppose that 
\begin{enumerate}
    \item[\textup{(1)}] $-(K_X+B)$ is $g$-nef,
    \item[\textup{(2)}] there exists an irreducible component of $B^{=1}$ that dominates $Z$, and
    \item[\textup{(3)}] $p>2$.
\end{enumerate}
Then $(X,B)$ is log liftable.
\end{lem}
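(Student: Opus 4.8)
The plan is to reduce to a situation where the fibration $g$ is close to a $\PP^1$-bundle and then transfer the liftability of such a bundle down to $(X,B)$. First I would take a log resolution $f\colon Y\to X$ of $(X,B)$ and run a relative $(K_Y+f^{-1}_{*}\Supp(B)+\Exc(f))$-MMP over $Z$; since the generic fiber of $g$ is $\PP^1$ and, after resolution, a smooth rational curve, this terminates with a birational morphism $h\colon Y\to W$ over $Z$ such that $W\to Z$ is a smooth morphism whose fibers are $\PP^1$, i.e.\ a $\PP^1$-bundle $\pi\colon W\to Z$, together with a reduced snc divisor $B_W$ on $W$ whose components are either sections/multisections of $\pi$ or fibers of $\pi$. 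The key structural input here is hypothesis~(2): the component of $B_{=1}$ dominating $Z$ survives (with coefficient contributing to the reduced divisor we track) to give a \emph{horizontal} component $\Gamma\subset B_W$ which is finite over $Z$.

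Next I would establish that the log smooth pair $(W, B_W)$ lifts to $W(k)$. Since $\pi\colon W\to Z$ is a $\PP^1$-bundle over a smooth curve, $W$ lifts to $W(k)$ (curves lift, and the $\PP(\mathcal E)$ construction lifts once $\mathcal E$ does); one then lifts the divisor $B_W$ component by component. The horizontal components are finite covers of $Z$, so — this is where $p>2$ enters — by choosing $p$ larger than the degrees of these multisections over $Z$ (which are controlled: the coefficients of $B$ lie in $[\tfrac12,1]$ and $-(K_X+B)$ is $g$-nef forces the horizontal part of $B$ to have bounded $\pi$-degree, at most $2$ on the generic fiber, hence the multisections have degree $\le 2$ over $Z$, tame when $p>2$) their ramification is tame and they lift together with $W$; the vertical components are fibers, which trivially lift. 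One then checks the snc condition is preserved by the lift, so $(W,B_W)$ lifts to $W(k)$, and hence $(X,B)$ is log liftable by definition (using $f$ composed with $h^{-1}$... more precisely, $Y$ is obtained from $W$ by blowing up, and a sequence of blow-ups of a lifted snc pair along lifted centers lifts, cf.~the argument in \cite[Lemma 2.8]{Kaw3} and \cite[Definition 2.6]{Kaw3}).

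The step I expect to be the main obstacle is the transfer back from $W$ to $Y$: when we run the relative MMP $h\colon Y\to W$, the divisor $h^{-1}_{*}\Supp(B_Y)+\Exc(h)$ need not be the total transform of $B_W$, and conversely the center of each blow-up in $h^{-1}$ must be a point lying on the (lifted) divisor in a way compatible with snc. I would handle this exactly as in \cite[Section 3]{Kaw3} and \cite[Lemma 2.8]{Kaw3}: rather than insisting $Y$ itself be the resolution in the definition of log liftability, I produce \emph{some} log resolution of $(X,B)$ dominating $W$ and argue that since all the relevant centers are closed points or curves that are already snc on $W$, the blow-ups lift over $W(k)$; the tame-ramification hypothesis $p>2$ is precisely what guarantees the multisection components remain snc (no wild vertical tangencies) throughout. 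A secondary subtlety is verifying that hypothesis~(2) really does survive the MMP as a horizontal component — one must check that the relative MMP over $Z$ contracts only $h$-exceptional curves and curves in fibers, never the dominating component, which follows since a component of $B_{=1}$ dominating $Z$ is not contracted by a birational morphism over $Z$.
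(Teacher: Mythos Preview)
Your approach is genuinely different from the paper's, which never runs an MMP or reduces to a $\PP^1$-bundle. Instead, the paper shows directly that the obstruction group $H^2(X,T_X(-\log\Supp(B)))$ vanishes, equivalently that there is no injection $s\colon\sO_X(-K_X)\hookrightarrow\Omega^{[1]}_X(\log\Supp(B))$. Using hypotheses (1) and (2) together with the coefficient bound $\ge\tfrac12$, one sees that every horizontal component satisfies $B_i\cdot F\le 2$ and that $(K_X+\lceil B\rceil)\cdot F\le 1$. The role of $p>2$ is then simply that each $B_i\to Z$ is generically separable, so after shrinking $Z$ the pair $(X,B)$ is log smooth over $Z$ and one has the exact sequence $0\to\sO_X(g^*K_Z)\to\Omega_X(\log\Supp(B))\to\Omega_{X/Z}(\log\Supp(B))\to 0$; restricting the hypothetical $s$ to a general fiber and comparing degrees ($2$ versus $\le 1$, then $2$ versus $0$) yields a contradiction. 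No explicit construction of a lift is needed.

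Your outline has two genuine gaps. First, the MMP step does not produce what you claim. Running the $(K_Y+D_Y)$-MMP with $D_Y=f^{-1}_*\Supp(B)+\Exc(f)$ need not terminate in a $\PP^1$-bundle: on a general fiber $(K_Y+D_Y)\cdot F=-2+\#(D_Y\cap F)$, and in the cases with two or three horizontal components this is $\ge 0$, so the MMP may stop with $K_W+D_W$ relatively nef and $W$ still having reducible (or even singular, if a $(-2)$-curve in $D_Y$ is contracted) fibers. If instead you run the $K_Y$-MMP you reach a $\PP^1$-bundle, but the pushforward divisor need not be snc: a horizontal component meeting a contracted $(-1)$-curve in two points acquires a node. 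Second, ``tame ramification'' does not by itself justify lifting a bisection $\Gamma\subset W$ as a \emph{divisor} in the lifted $\mathcal{W}$; tameness lifts the abstract curve $\Gamma$ and the cover $\Gamma\to Z$, but the obstruction to lifting the embedding lies in $H^1(\Gamma,N_{\Gamma/W})$, which you have not controlled. The paper's cohomological argument bypasses both issues.
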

\begin{proof}
Let $F$ be a general fiber of $g\colon X \to Z$. 
Since $\dim\,Z=1$ and $g_{*}\sO_X=\sO_{Z}$, the fiber $F$ is an integral curve (\cite[Corollary 7.3]{Bad}).
By assumptions (1) and (2), we have $K_X\cdot F\leq -B\cdot F<0$.
Therefore, we obtain $F\cong \PP_k^1$ and $K_X\cdot F=\deg_{F}(\sO_F(K_F))=-2$ (\cite[Chapter 7, Proposition 4.1]{Liu}).
Moreover, $F$ is nef, as $F^2 = 0$. 

By Theorem \ref{thm:log lift criterion}, it suffices to show that 
\[
H^2(X, T_{X}(-\log\,\Supp(B)))=H^2(X, \sO_X)=0.
\]
Since $F$ is nef and $K_X\cdot F=-2$, we have $H^2(X, \sO_X)\cong H^0(X, \sO_X(K_X))=0$.
We show that 
\[
H^2(X, T_{X}(-\log\,\Supp(B)))=H^0(X, (\Omega^{[1]}_{X}(\log\,\Supp(B))\otimes \sO_X(K_X))^{**})=0.
\]
To do so, it suffices to show that 
\[
g_{*}(\Omega^{[1]}_{X}(\log\,\Supp(B))\otimes \sO_X(K_X))^{**}=0.
\]
Since this sheaf is torsion-free, it is enough to prove that it has rank zero. This property is local on $Z$, so we can shrink $Z$ if necessary. In particular, we may assume that $Z$ is affine, $X$ is smooth, and all the components of $B$ dominate $Z$.

Suppose by contradiction that 
\begin{align*}
  0\neq g_{*}(\Omega^{[1]}_{X}(\log\,\Supp(B))\otimes \sO_X(K_X))^{**}=H^0(X, (\Omega^{[1]}_{X}(\log\,\Supp(B))\otimes \sO_X(K_X))^{**})
\end{align*}
Then we can take an injective morphism $s\colon\sO_{X}(-K_{X})\hookrightarrow \Omega^{[1]}_{X}(\log\,\Supp(B))$.

We prove the following claim.
\begin{cl}
    Let $B\coloneqq \sum_{i=0}^{n}b_iB_i$ be the irreducible decomposition.
    We have $B_i \cdot F\leq 2$ for every $i\geq 0$ and $(K_{X}+\lceil B\rceil)\cdot F\leq 1$.
\end{cl}
\begin{clproof}
We can assume that $B_0$ is an irreducible component of $B^{=1}$ that dominates $Z$ by assumption (2). 
By changing the order of the components, we can also assume that $B_i \cdot F \geq B_{i+1} \cdot F$ for $i \geq 1$.

Recalling that $-K_X \cdot F = 2$, $-(K_X + B) \cdot F \geq 0$, and $b_i \geq \frac{1}{2}$, we observe that one of the following must hold:
\begin{enumerate}
    \item $n=0$ and $B_0\cdot F\leq 2$,
    \item $n=1$, $B_0\cdot F=1$, and $B_1\cdot F\leq 2$, or
    \item $n=2$ and $B_0\cdot F=B_1\cdot F=B_2\cdot F=1$.
\end{enumerate}
Therefore, we can see that $B_i\cdot F\leq 2$ for every $i$, and $\lceil B\rceil\cdot F\leq3$, or equivalently, $(K_{X}+\lceil B\rceil)\cdot F\leq 1$ for every case, as required.   
\end{clproof}

By the claim, every irreducible component $B_i$ of $B$ is generically \'etale over $Z$, as otherwise we have a contradiction: $2 \geq B_i \cdot F \geq p >2$.
Thus we can shrink $Z$ so that $(X,B)$ is log smooth over $Z$, and we have the following diagram:
\begin{equation*}
\xymatrix{ & & \sO_{X}(-K_{X}) \ar@{.>}[ld]^{u} \ar[d]^{s} \ar[rd]^{t} &\\
                 0\ar[r] &\sO_X(g^{*}K_Z)\ar[r]   & \Omega_{X}(\log\,\Supp(B)) \ar[r]^-{\rho}  & \Omega_{X/Z}(\log\, \Supp(B)) \ar[r] & 0.}
\end{equation*}
The construction of the exact sequence is as follows. When $B = 0$, this is the usual relative differential sequence \cite[Proposition II.8.11]{Har}. When $B \neq 0$, we define the map $\rho$ as follows: $d(g^*z) \mapsto 0$ and $db/b \mapsto db/b$, where $z$ is a coordinate on $Z$ and $b$ is a local equation of $B$. Note that $g^*z$ and $b$ form a coordinate system on $X$ since $B$ is a simple normal crossing over $Z$.

By adjunction, we have \[\Omega_{X/Z}(\log\, \Supp(B))|_{F}=\sO_{F}(K_F+\lceil B\rceil|_F)=\sO_{F}(K_X+\lceil B\rceil).\] Using this, we obtain
\[
\deg_{F}(\sO_F(-K_{X}))=2>1\geq (K_{X}+\lceil B\rceil)\cdot F=\deg_{F}(\Omega_{X/Z}(\log\, \Supp(B))|_{F}).
\]
Thus, the map $t|_F$ must be the zero map. Since $F$ is a general fiber, this implies that $t$ is the zero map. Consequently, an injective homomorphism $u\colon \sO_{X}(-K_{X})\hookrightarrow \sO_{X}(g^{*}K_Z)$ is induced. Restricting $u$ to $F$ gives an injective map $u|_F$, where the injectivity follows from the generality of $F$.
Then we have \[\deg_F(\mathcal{O}_F(-K_X)) = 2 \leq \deg_F(\mathcal{O}_F(g^*K_Z)) = 0,\] a contradiction.
\end{proof}

\begin{prop}\label{prop:non-klt case}
Let $I \subset [\frac{1}{2}, 1] \cap \mathbb{Q}$ be a DCC set of rational numbers. There exists a positive integer $p(I) \in \mathbb{Z}_{>0}$ depending only on $I$ with the following property:
For every projective surface pair $(X, B)$ over an algebraically closed field of characteristic $p>p(I)$ satisfying the following conditions:
\begin{enumerate}
    \item[\textup{(1)}] $(X,B)$ is log Calabi-Yau, 
    \item[\textup{(2)}] the coefficients of $B$ belong to $I$, and
    \item[\textup{(3)}] $(X,B)$ is not klt
\end{enumerate}
the pair $(X, B)$ is log liftable.
\end{prop}
\begin{proof}
We may assume that $I$ is a finite set by Lemma \ref{lem:global ACC}. 
Let $(X,B)$ be a projective surface pair over an algebraically closed field of characteristic $p$ satisfying the conditions (1)--(3).
We aim to find the positive integer $p(I)$ as in the proposition.

Assuming  $1\in I$, we can apply Lemma \ref{lem:push} (1) to replace the pair $(X, B)$ with its dlt blow-up (see \cite[Definition 4.3]{Kaw3}). Since $(X,B)$ is not klt, we have $B^{=1} \neq 0$. By Proposition \ref{prop:epsilon-dlt}, there exists a positive real number $\epsilon(I) \in \mathbb{R}_{>0}$ that depends only on $I$, such that $(X, B^{<1})$ is $\epsilon(I)$-klt.

Since $K_X+B^{<1}\equiv -B^{=1}$ is not pseudo-effective, we run a $(K_X+B^{<1})$-MMP to obtain a birational contraction $\phi\colon (X, B^{<1}) \to (X', (B')^{<1})$ and a $(K_{X'}+(B')^{<1})$-Mori fiber space $g\colon X' \to Z$, where $B' \coloneqq \phi_{*}B$. 
Then $(X', (B')^{<1})$ is $\epsilon(I)$-klt, and Lemma \ref{lem:log CY} shows that $(X', B')$ is log Calabi-Yau. 
By Lemma \ref{lem:push} (2), if $(X',B')$ is log liftable, then so is $(X,B)$. Therefore, we can replace the pair $(X,B)$ with $(X',B')$.

First, we treat the case where $\dim\,Z = 1$. Since $K_X+B\equiv 0$ and $-(K_X+B^{<1})$ is $g$-ample, there exists an irreducible component of $B^{=1}$ that dominates $Z$. If $p>2$, then $(X,B)$ is log liftable by Lemma \ref{lem:fiber cases}.
Therefore, it suffices to choose $p(I)\geq 2$.

Next, we consider the case where $\dim\,Z = 0$. 
In this case, $(X,B^{<1})$ is an $\epsilon(I)$-klt log del Pezzo surface.
By Lemma \ref{lem:log lift criterion 2}, to find the desired positive integer $p(I)$, it suffices to confirm the following conditions:
\begin{enumerate}
    \item[(a)] There exists a positive integer $m(I) \in \mathbb{Z}_{>0}$ that depends only on $I$, such that $m(I)B$ is Cartier.
    \item[(b)] There exists a very ample divisor $H$ on $X$ such that there are only finitely many possibilities for $H^2$, $H \cdot K_X$, $H \cdot B$, $K_X \cdot B$, $B^2$, $\dim\,H^0(X, \mathcal{O}_X(H))$, and $\chi(X, \mathcal{O}_X)$.
\end{enumerate}

First, we check (a).
We have a uniform bound on the $\mathbb{Q}$-factorial index, which depends only on $I$, by \cite[Proposition 6.1 (d)]{Wit}. Therefore, recalling the fact that $I$ is a finite set, the condition (a) is satisfied.

Next, we check (b). Suppose that $p>5$. Then there exists a very ample divisor $H$ such that there are only finitely many possibilities for $H^2$, $H \cdot K_X$, and $H^i(X, \mathcal{O}_X(H)) = 0$ for $i > 0$ by \cite[Corollary 1.4]{Wit}. 
Moreover, using \cite[Lemma 6.2]{Wit}, we can conclude that there are only finitely many possibilities for $K_X^2$. Since $B \equiv -K_X$, there are also only finitely many possibilities for $H \cdot B$, $K_X \cdot B$, and $B^2$.

Let $f\colon W\to X$ be a resolution. Then $W$ is a smooth rational surface.
Since $X$ has only rational singularities, we have $H^i(X, \sO_X(D))\cong H^i(W, \sO_W(f^{*}D))$ for $i\geq 0$ and every Cartier divisor $D$ on $X$. 
In particular, we have $\mathcal{X}(X, \sO_X)=\mathcal{X}(W, \sO_W)=1$.
By the Riemann-Roch theorem, we have 
\begin{align*}
&\dim\,H^0(X, \sO_X(H))=\mathcal{X}(X, \sO_{X}(H))
                 =\mathcal{X}(W, \sO_{W}(f^{*}H))\\
                 =&\frac{(f^{*}H)^2}{2}+\frac{f^{*}H\cdot (-K_{W})}{2}+1
                 =\frac{(H)^2}{2}+\frac{H\cdot (-K_{X})}{2}+1,
\end{align*}
where we use the fact that $H^i(X, \sO_X(H))=0$ for $i>0$ for the first equality.
Thus there are only finitely many possibilities for $\dim\,H^0(X, \sO_X(H))$, and we have confirmed condition (b).

Therefore, we can find the desired positive integer $p(I)$.
\end{proof}

\subsection{Proof of main theorems}

\begin{proof}[Proof of Theorem \ref{Intro:Thm:log liftability of log CY}]
The assertion follows from Propositions \ref{prop:klt case} and \ref{prop:non-klt case}.
\end{proof}

\begin{proof}[Proof of Theorem \ref{Intro, KVV}]
By Theorem \ref{Intro:Thm:log liftability of log CY}, there exists a positive integer $p_0$ with the following property:
For every log Calabi-Yau surface pair $(X,B)$ over an algebraically closed field of characteristic $p>p_0$ such that $B$ has standard coefficients, the pair $(X,B)$ is log liftable.
We prove that this $p_0$ is the desired positive integer.

We take a log Calabi-Yau surface pair $(X,B)$ as above.
Let $f\colon Y\to X$ be a log resolution of $(X, B)$.
We set $A\coloneqq D-(K_X+\Delta)$.
By Serre duality for Cohen-Macaulay sheaves (\cite[Theorem 5.71]{Kollar-Mori}), we have 
\begin{align*}
H^i(X, \sO_X(D))\cong H^{2-i}(X, \sO_X(K_X-D))&=H^{2-i}(X, \sO_X(\lfloor K_X-D+\Delta\rfloor))\\
&=H^{2-i}(X, \sO_X(\lfloor-A\rfloor))\\
&=H^{2-i}(X, \sO_X(-A)),
\end{align*}
and it suffices to show that $H^i(X, \sO_X(-A))=0$ for $i<2$.
When $i=0$, the vanishing follows from the bigness of $A$.
We prove that $H^1(X, \sO_X(-A))=0$.
By the spectral sequence and the projection formula for $\Q$-divisors, we have an injective map
\[
H^1(X, \sO_X(-A))=H^1(X, f_{*}\sO_Y(- f^{*}A ))\hookrightarrow H^1(Y, \sO_Y(- f^{*}A )).
\]
By Theorem \ref{Intro:Thm:log liftability of log CY}, the pair $(Y, f_{*}^{-1}\Supp(B)+\Exc(f))$ lifts to $W(k)$.
Since $\Supp(\{f^{*}A\})\subset f_{*}^{-1}\Supp(B)+\Exc(f)$, it follows from \cite[Theorem 2.11]{Kaw3} that $H^1(Y, \sO_Y(- f^{*}A ))=0$, as desired.
\end{proof}

%%%%%%%%%%%%%%%%%%%%%%%%%%%%%%%%%%%%%%%%%%%%%%%%%%%%%%%%%%%%%%%%%%%%%%%%%%%%%%%%%%%%%%%%%%%%%%%%%%%%%%%%%%%%%%%%%%%%%%%%%%%%%%%%%%%%%%%%%%%%%%%%%%

\section*{Acknowledgements}
The author wishes to express his gratitude to Professor Shunsuke Takagi, Shou Yoshikawa, and Fabio Bernasconi for their helpful comments.
He is also grateful to the anonymous referee for pointing out some mistakes.
This work was supported by JSPS KAKENHI Grant Numbers JP19J21085 and JP22J00272.

\newcommand{\etalchar}[1]{$^{#1}$}

% \bibliography{hoge.bib}
% \bibliographystyle{alpha}

\end{document}